\theoremstyle{plain}
\newtheorem{theorem}{Theorem}[section]
\newtheorem{definition}[theorem]{Definition}
\newtheorem{lemma}[theorem]{Lemma}
\newtheorem{corollary}[theorem]{Corollary}
\newtheorem{proposition}[theorem]{Proposition}
\theoremstyle{remark}
\def\R{{\mathbf R}}% real numbers
\def\N{{\mathbf N}}% nonnegative integers
\def\O{\mathcal O}
\def\({\left(}
\def\){\right)}
\def\<{\left\langle}
\def\>{\right\rangle}
\def\le{\leqslant}
\def\ge{\geqslant}
\def\Eq#1#2{\mathop{\sim}\limits_{#1\rightarrow#2}}
\def\Tend#1#2{\mathop{\longrightarrow}\limits_{#1\rightarrow#2}}
\def\d{{\partial}}
\def\eps{\varepsilon}
\def\l{\lambda}
\def\si{{\sigma}}
\numberwithin{equation}{section}
\begin{document}

\title[On  Schr\"odinger
  equations with modified dispersion]{On  Schr\"odinger
  equations with modified dispersion}   
\author[R. Carles]{R\'emi Carles}
\address{Univ. Montpellier~2\\Math\'ematiques
\\CC~051\\F-34095 Montpellier}
\address{CNRS, UMR 5149\\  F-34095 Montpellier\\ France}
\email{Remi.Carles@math.cnrs.fr}

\begin{abstract}
 We consider the nonlinear Schr\"odinger equation with a modified
 spatial dispersion, given either by an homogeneous Fourier
 multiplier, or by a bounded Fourier multiplier. Arguments based on
 ordinary differential equations yield ill-posedness results which are
 sometimes sharp. When the
 Fourier multiplier is bounded, we infer that no Strichartz-type
 estimate improving on Sobolev embedding is available. Finally, we
 show that when the symbol is bounded,
 the Cauchy problem may be 
 ill-posed in the case of critical
 regularity, with arbitrarily small initial data. The same is true
 when the symbol is homogeneous of degree one, where scaling arguments
 may not even give the right critical value. 
\end{abstract}
\thanks{2010 \emph{Mathematics Subject Classification.} {Primary
    35Q55; Secondary 35A01, 35B33, 35B45.} }
\thanks{This work was supported by the French ANR project
  R.A.S. (ANR-08-JCJC-0124-01).}
\maketitle

\section{Introduction}
\label{sec:intro}
For $(t,x)\in \R_+\times \R^d$, we consider 
\begin{equation}
  \label{eq:gen}
  i\d_t u + P(D)u = \l |u|^{2\si}u\quad ;\quad u_{\mid t=0}=u_0,
\end{equation}
where $D=-i\nabla_x$, $P:\R^d\to \R$,
$\l\in \R$ and $\si>0$. Since the Fourier multiplier $P$ is
real-valued, the free flow ($\l=0$) 
generates a unitary group on $\dot H^s(\R^d)$, $s\in \R$: 
\begin{equation*}
  S(t) = e^{-it P(D)}. 
\end{equation*}
We consider two cases:
\begin{itemize}
\item $P$ is homogeneous, $P(\mu \xi)= \mu^m P(\xi)$, for all $\xi\in \R^d$,
  $\mu>0$, with $m\ge 1$. 
\item $P$ is bounded, $P\in L^\infty(\R^d)$.
\end{itemize}
The first case includes the standard Schr\"odinger operator ($P(\xi)=-|\xi|^2$),
and the fourth-order Schr\"odinger operator ($P(\xi)=|\xi|^4$), 
studied for instance in \cite{BAKS00,Pau07,Pau09}. Smoothing effects and
dispersive estimates have
been established for rather general Fourier multipliers $P$ in
\cite{BAS99,KPV91}. The case $d=1$,
$P(\xi)=\xi^{2j+1}$, $j\in \N$, has been studied initially in
\cite{KPV94}; the case $d=2$, with $P$ a polynomial of degree $m=3$
has been studied in \cite{BAKS03}, 
revealing different dispersive phenomena according to the precise structure of
$P$. The  case 
$m\in 2\N$, with $P$ elliptic and $\nabla^2P$ non-degenerate outside
$\{\xi=0\}$, appears as a particular 
case of the framework 
in \cite{CuGu07}. It is shown there that if $s>\max(0,s_0)$, then the
Cauchy problem \eqref{eq:gen} is locally well-posed in $H^s(\R^d)$,
where 
\begin{equation}\label{eq:sc}
  s_0 = \frac{d}{2}- \frac{m}{2\si}. 
\end{equation}
This index corresponds to the one given by scaling arguments: if $u$
solves \eqref{eq:gen}, then for any $\Lambda>0$, $u_\Lambda:(t,x)\mapsto 
\Lambda^{m/(2\si)}
u(\Lambda^mt,\Lambda x)$ solves the same equation. The value of $s$ for which
the $\dot H^s(\R^d)$-norm is invariant under $u\mapsto  u_\Lambda$ is
$s=s_0$. We will see in \S\ref{sec:crit} that this scaling argument 
may not yield the sharp Sobolev regularity: if $m=1$, the Cauchy
problem \eqref{eq:gen} may be strongly ill-posed in $H^s(\R^d)$ for
all $s\le d/2$.  

In \cite{CuGu07}, the proof of local well-posedness uses dispersive
and Strichartz estimates for $S$, established in \cite{Cui05} for $d=1$, 
and in \cite{Cui06} for $d\ge 2$. Note that in the case $d=1$, 
dispersive and Strichartz estimates for $S$ are proved in
\cite{HaNa08} for $P(\xi)=|\xi|^m$ and  $m\ge 2$ (not necessarily
an integer).   
By resuming arguments similar to those presented in
\cite{BGTENS,CCT2}, we prove that in this framework, the index $s_0$
is sharp, in the sense that the 
nonlinear flow map fails to be uniformly continuous at the origin in
$H^s(\R^d)$, if $s<s_0$. This property has been established in
\cite{Pau09} for the case $P(\xi)=|\xi|^4$ with
$(d,\si)=(3,1)$. However,  the index $s_0$ may not correspond to the
critical Sobolev regularity (see \S\ref{sec:crit}).
\smallbreak

The second case, where $P$ is bounded, is motivated by the results
presented in \cite{DeFa09}, where $P(\xi)=-\frac{1}{h}\arctan(h|\xi|^2)$ is
considered to construct numerical approximations of the solution to 
the linear Schr\"odinger equation
\begin{equation*}
  i\d_t u + \Delta u = V(x)u, 
\end{equation*}
and $0<h\ll 1$ denotes the time step. We will see below that in such a
framework, no Strichartz estimate is available, even if one is ready
to pay some loss of derivative. Another example of bounded symbol one
may think of is 
\begin{equation*}
  i\d_t u + \Delta \(1-\Delta\)^{-1}u =\l |u|^{2\si}u.
\end{equation*}
In these two examples, $P$ is elliptic. We will see however that no
Strichartz estimate (better than Sobolev embedding) is available
there, and that the critical regularity is $s_c=d/2$.  
\subsection{Norm inflation}
\label{sec:inflation}
Our result in this direction is:
\begin{theorem}\label{theo}
  Let $d\ge 1$, $\l\in \R$, $\si>0$. Assume that either $\si$ is an
  integer, or that there exists an integer $r$ such that 
$2\si\ge r>d/2$. \\
$1.$ Suppose that $P$ is $m$-homogeneous, with $m\ge  1$, and
denote
$s_0 = d/2- m/(2\si)$. Suppose that $s_0>0$ and let $0<s<s_0$. 
There exists
a family $(u_0^h)_{0<h \le 
  1}$ in ${\mathcal S}({\R}^d)$ with 
\begin{equation*}
  \|u_0^h\|_{H^{s}({\R}^d)} \to 0 \text{ as
  }h \to 0, 
\end{equation*}
and a solution $u^h$ to
\eqref{eq:gen} and $0<t^h \to 0$, such that: 
\begin{equation*}
  \|u^h(t^h)\|_{H^{s}(\R^d)} \to +\infty \text{ as }h \to
 0.
\end{equation*}
$2.$ If $P$ is bounded, then the above conclusion remains
true for any $0<s<d/2$. 
\end{theorem}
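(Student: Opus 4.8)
The plan is to follow the ODE-based strategy of \cite{BGTENS,CCT2}: on the range of scales we shall use, the dispersion $P(D)$ acts as a perturbation of the nonlinear interaction, so the dynamics is governed by the ordinary differential equation $i\partial_t\psi=\lambda|\psi|^{2\sigma}\psi$. This ODE preserves the modulus pointwise, so the solution issued from $u_0^\eps$ is explicit, and I take it as the approximate solution
\[
  \apsi(t,x)=u_0^\eps(x)\,e^{-i\lambda t|u_0^\eps(x)|^{2\sigma}}.
\]
The whole mechanism is the self-modulation of the phase: although $u_0^\eps$ is smooth and small in $H^s$, the factor $|u_0^\eps(x)|^{2\sigma}$ has nontrivial spatial variation, so $\nabla\apsi$ and its higher derivatives grow linearly in $t$, pumping mass to high frequencies and inflating the $\dot H^s$ norm.

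I would look for the data as a single concentrating bump, $u_0^\eps(x)=\eps^{-b}a(x/\eps)$ with $a\in\mathcal{S}(\R^d)$, $\eps=h\to0$ and $b>0$ to be fixed. Then $\|u_0^\eps\|_{L^2}\sim\eps^{d/2-b}$ (conserved by the flow), the phase gradient has size $K(t)\sim t\,\eps^{-2\sigma b-1}$, and $\|\apsi(t)\|_{\dot H^\rho}\sim K(t)^\rho\|u_0^\eps\|_{L^2}$ whenever $K(t)\gg\eps^{-1}$. The requirements to arrange are: $\|u_0^\eps\|_{H^s}\to0$ (forcing $b<d/2-s$), the frequency boost $t\,\eps^{-2\sigma b}\to\infty$ (so that inflation actually occurs), the choice of a time $t^\eps=\eps^c\to0$, and $\|\apsi(t^\eps)\|_{\dot H^s}\to\infty$. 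Optimizing the exponents $b,c$ against these inequalities is a short computation; the constraint $s<s_0$ in the homogeneous case (resp. $s<d/2$ in the bounded case) emerges precisely as the condition under which the admissible range of $(b,c)$ is nonempty.

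The heart of the argument, and the step I expect to be hardest, is to show that $\apsi$ is genuinely close to the true solution $u^\eps$ of \eqref{eq:gen} on $[0,t^\eps]$, so that the inflation of $\|\apsi(t^\eps)\|_{\dot H^s}$ transfers to $\|u^\eps(t^\eps)\|_{H^s}$. Setting $w=u^\eps-\apsi$, one has
\[
  i\partial_t w+P(D)w=\lambda\bigl(|u^\eps|^{2\sigma}u^\eps-|\apsi|^{2\sigma}\apsi\bigr)-P(D)\apsi,
\]
so $\apsi$ is an approximate solution with consistency error $P(D)\apsi$. I would first invoke local well-posedness in a Sobolev space of high integer regularity (where the data is genuinely large), run a Gronwall/bootstrap estimate on $w$ there—this is exactly where the hypothesis that $\sigma$ be an integer, or that $2\sigma\ge r>d/2$, is used, through tame product/composition estimates for $z\mapsto|z|^{2\sigma}z$—and then check that $\int_0^{t^\eps}\|P(D)\apsi(\tau)\|\,d\tau$ stays negligible against the inflated norm. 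In the homogeneous case $\|P(D)\apsi(\tau)\|_{\dot H^s}\lesssim\|\apsi(\tau)\|_{\dot H^{m+s}}\sim K(\tau)^{m+s}\|u_0^\eps\|_{L^2}$, and comparing its time integral with $K(t^\eps)^s\|u_0^\eps\|_{L^2}$ produces the smallness factor $\sim(t^\eps)^{m+1}\eps^{-m(2\sigma b+1)}$, whose smallness is exactly the constraint forcing $b>m/(2\sigma)$, i.e. $s<s_0$. The delicate point is closing this estimate on a time interval that is simultaneously long enough to produce inflation and short enough to keep $w$ under control.

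For part~$2$ (the case $P\in L^\infty$) the same construction applies, but the error analysis is much softer: $P(D)$ is bounded on every $\dot H^s$ with $\|P(D)f\|_{\dot H^s}\le\|P\|_{L^\infty}\|f\|_{\dot H^s}$, so the consistency error satisfies $\int_0^{t^\eps}\|P(D)\apsi(\tau)\|_{\dot H^s}\,d\tau\lesssim\|P\|_{L^\infty}\,t^\eps\,\|\apsi(t^\eps)\|_{\dot H^s}$, smaller than the inflated norm by the factor $t^\eps\to0$ irrespective of the size of the frequencies involved. Consequently the only surviving restriction is $b<d/2-s$, i.e. $s<d/2$, which is why the bounded case covers the full range $0<s<d/2$. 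Renaming $\eps$ as $h$ then yields the stated families $(u_0^h)$ and the corresponding solutions $u^h$.
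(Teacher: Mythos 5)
Your overall strategy---concentrating data, using the explicit ODE solution $\apsi$ as an approximate solution, and comparing it to the true solution with consistency error $P(D)\apsi$---is the same as the paper's. But there is a genuine gap in the error analysis, and it sits exactly at the step you yourself flag as the delicate one: you never account for the exponential amplification produced by the Gronwall step. Writing the energy estimate for $w=u^\eps-\apsi$ in an algebra $H^r$, $r>d/2$ (which is where it must be run; in $\dot H^s$ with $s<d/2$ the difference of nonlinearities cannot be estimated), the nonlinear term contributes a factor of the form
\begin{equation*}
\exp\Bigl(C\int_0^{t}\bigl(\|\apsi(\tau)\|_{H^r}^{2\si}+\|w(\tau)\|_{H^r}^{2\si}\bigr)\,d\tau\Bigr),
\end{equation*}
whereas your final accounting compares only $\int_0^{t^\eps}\|P(D)\apsi\|\,d\tau$ against $K(t^\eps)^s\|u_0^\eps\|_{L^2}$, i.e.\ silently treats this factor as $O(1)$. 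It is not. With the purely polynomial ansatz $u_0^\eps=\eps^{-b}a(x/\eps)$, $b<d/2-s$, one has $\|\apsi(\tau)\|_{H^r}\gtrsim\eps^{d/2-b-r}$ for all $\tau\ge0$, so the exponent is at least $C\,t^\eps\,\eps^{2\si(d/2-b-r)}$. Since $\|u_0^\eps\|_{H^s}\sim\eps^{d/2-s-b}$ tends to $0$ polynomially, inflation requires $(t^\eps\eps^{-2\si b})^s$ to exceed a positive power of $1/\eps$, hence in particular $t^\eps\gg\eps^{2\si b}$; the Gronwall exponent is then $\gg\eps^{-2\si(r-d/2)}\to\infty$, and the factor $e^{\eps^{-\mu}}$, $\mu>0$, swamps any polynomial smallness of the consistency error. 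The same objection applies verbatim to your treatment of the bounded case, where you again weigh the source term only against the inflated norm.

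This is precisely why the paper's construction is built on logarithms rather than powers. The amplitude is taken at the critical rate, $u_0^h=h^{s-d/2}\kappa^h a_0(x/h)$, with the extra damping $\kappa^h=(\log(1/h))^{-\theta}$, and after the rescaling $\psi(\tau,y)=h^{d/2-s}u^h(h^{2+\alpha}\tau,hy)$ the ODE comparison is performed only for $\tau\le\eps(\log(1/\eps))^{\delta}$ with $\delta$ small: on that range the Gronwall factor is $e^{C(\log(1/\eps))^{\delta(r+1)}}=o(\eps^{-\beta/2})$ and is beaten by the $\eps^{1+\beta}$ smallness of the source $h^{2\si(d/2-s)}P(h^{-1}D_y)\varphi$. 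The price is that $\|\varphi(\tau)\|_{H^s}$ grows only by $(\log(1/\eps))^{s\delta}$, which yields norm inflation only because the data's $H^s$ norm decays logarithmically ($\theta$ small relative to $\delta$). Your heuristic does correctly locate the thresholds $s_0$ and $d/2$ from the consistency error, but as written the estimates cannot be closed; the logarithmic refinement is not a technicality, it is the mechanism that makes the argument work.
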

Theorem~\ref{theo} is proved in \S\ref{sec:theo}, by adapting the
ordinary differential equation mechanism used in, e.g.,
\cite{BGTENS,CCT2}. However, the critical Sobolev regularity
$s_c$ may satisfy $s_c>s_0$ (see \S\ref{sec:crit}): in view of \cite{CuGu07}
and Theorem~\ref{theo}, we have $s_c=s_0$
at least when $P(\xi)=\mu |\xi|^m$, $\mu\in \R\setminus\{0\}$, $m\in
2\N\setminus\{0\}$. We also refer to \cite{BoSa10} where a different
result concerning the 
lack of well-posedness is established for a broad variety of
dispersive equations, even in the linear case. 
\subsection{Absence of Strichartz estimates}
\label{sec:stri}

In this paragraph, we focus our discussion on the case where $P$ is
bounded. For $s>d/2$,
$H^s(\R^d)$ being an algebra, local well-posedness in $H^s(\R^d)$ is
straightforward, provided that the nonlinearity is sufficiently smooth
(see e.g. \cite{TaoDisp}). Therefore, the critical 
threshold is $s_c=d/2$, and from Theorem~\ref{theo}, no dispersive
property is present to decrease this number. More precisely, no
Strichartz estimate is available for $S(\cdot)$, even if one is ready
to pay some loss of regularity which is not worse than the result
provided by Sobolev embedding. 
To state this property precisely,
we recall the standard definition.
\begin{definition}\label{def:adm}
 A pair $(p,q)\not =(2,\infty)$ is admissible if $p\ge 2$, $q\ge  2$,
 and 
$$\frac{2}{p}= d\left( \frac{1}{2}-\frac{1}{q}\right).$$
\end{definition}
By Sobolev embedding, for all $(p,q)$ (not necessarily admissible)
with $2\le q<\infty$, there exists $C>0$ such that for all $u_0\in
H^{d/2-d/q}(\R^d)$, and all finite time interval $I$,
\begin{align*}
    \|S(\cdot)u_0\|_{L^p(I;L^q(\R^d))}&\le
    C\|S(\cdot)u_0\|_{L^p(I;H^{d/2-d/q}(\R^d))}\\
    &\le C\|u_0\|_{L^p(I;H^{d/2-d/q}(\R^d))} =
    C|I|^{1/p}\|u_0\|_{H^{d/2-d/q}(\R^d)}. 
  \end{align*}
When $P$ is bounded, this estimate cannot be improved:
\begin{corollary}\label{cor:noStri}
  Let $d\ge 1$, and $P\in L^\infty(\R^d;\R)$. Suppose that there
  exist an admissible pair $(p,q)$, an index $k\in \R$, a time
  interval $I\ni 0$, $|I|>0$, and  a constant $C>0$ such that
  \begin{equation*}
    \|S(\cdot)u_0\|_{L^p(I;L^q(\R^d))}\le C\|u_0\|_{H^k(\R^d)},\quad
    \forall u_0\in H^k(\R^d). 
  \end{equation*}
Then necessarily, $k\ge 2/p= d/2-d/q$. 
\end{corollary}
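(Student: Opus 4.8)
The plan is to defeat any Strichartz gain by testing the assumed inequality against spatially concentrated data and exploiting the fact that, since $P$ is bounded, $S(t)=e^{-itP(D)}$ does not disperse on any frequency-independent time scale. First I would reduce to a one-sided interval: since $0\in I$ and $|I|>0$, after possibly replacing $P$ by $-P$ (which stays bounded and corresponds to $t\mapsto-t$) I may assume $[0,\tau_*]\subseteq I$ for some $\tau_*>0$. Fix a nonzero $\phi\in\Sch(\R^d)$ and, for $\lambda\ge1$, set $u_\lambda(x)=\phi(\lambda x)$, so that $\widehat{u_\lambda}(\xi)=\lambda^{-d}\widehat\phi(\xi/\lambda)$ is concentrated at frequencies $|\xi|\sim\lambda$.

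The crux is a rescaling that converts $S(t)u_\lambda$ into the action, on the fixed profile $\phi$, of a propagator with a \emph{uniformly} bounded symbol. Writing $\xi=\lambda\eta$ one checks $S(t)u_\lambda(x)=\bigl(\widetilde S_\lambda(t)\phi\bigr)(\lambda x)$, where $\widetilde S_\lambda(t)=e^{-itP(\lambda D)}$ has symbol $P(\lambda\cdot)$, still bounded by $M:=\|P\|_{L^\infty}$ independently of $\lambda$; hence $\|S(t)u_\lambda\|_{L^q}=\lambda^{-d/q}\|\widetilde S_\lambda(t)\phi\|_{L^q}$. Using the pointwise bound $|e^{-itP(\lambda\eta)}-1|\le M|t|$, I would estimate $\|\widetilde S_\lambda(t)\phi-\phi\|_{L^2}\le CM|t|\,\|\phi\|_{L^2}$ by Plancherel and $\|\widetilde S_\lambda(t)\phi-\phi\|_{L^\infty}\le M|t|\,\|\widehat\phi\|_{L^1}$ by Fourier inversion, then interpolate to get $\|\widetilde S_\lambda(t)\phi-\phi\|_{L^q}\le C|t|$ with $C$ independent of $\lambda$. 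Consequently there is $\tau_0>0$, also independent of $\lambda$, such that $\|\widetilde S_\lambda(t)\phi\|_{L^q}\ge\frac{1}{2}\|\phi\|_{L^q}$ for all $|t|\le\tau_0$.

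Setting $\tau=\min(\tau_0,\tau_*)$, this yields the lower bound $\|S(\cdot)u_\lambda\|_{L^p(I;L^q)}\ge\frac{1}{2}\tau^{1/p}\lambda^{-d/q}\|\phi\|_{L^q}$. On the other side, a direct computation gives $\|u_\lambda\|_{H^k}\le C'\lambda^{k-d/2}$ for $\lambda\ge1$ when $k\ge0$ (the borderline $q=2$, i.e.\ $p=\infty$ and $2/p=0$, already forces $k\ge0$ from $\|S(t)u_0\|_{L^2}=\|u_0\|_{L^2}$, and for $q>2$ the same concentration handles $k<0$ as well). Feeding both estimates into the assumed inequality produces $\lambda^{d/2-d/q-k}\le C''$ for every $\lambda\ge1$; letting $\lambda\to+\infty$ forces $d/2-d/q-k\le0$, that is $k\ge d/2-d/q=2/p$ by admissibility.

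The main obstacle, and the exact point where boundedness of $P$ is used, is the frequency-\emph{independent} lower bound $\|\widetilde S_\lambda(t)\phi\|_{L^q}\ge\frac{1}{2}\|\phi\|_{L^q}$ on a fixed interval $|t|\le\tau_0$. This is precisely the statement that no dispersion occurs: the phase $e^{-itP(\lambda\eta)}$ evolves at rate at most $M$ no matter how large $\lambda$ is, so $\widetilde S_\lambda(t)$ remains close to the identity on an $O(1)$ time interval. For a homogeneous symbol such as $-|\xi|^2$ the rescaled symbol $P(\lambda\eta)=-\lambda^2|\eta|^2$ is unbounded, the relevant interval shrinks like $\lambda^{-2}$, and one recovers the usual dispersive scaling; it is the absence of this mechanism for bounded $P$ that rules out any Strichartz improvement over Sobolev embedding.
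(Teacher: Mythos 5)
Your argument is correct, but it is genuinely different from the paper's. The paper deliberately proves Corollary~\ref{cor:noStri} by a \emph{nonlinear} contradiction: if \eqref{eq:stri} held with $k<2/p$, Proposition~\ref{prop:LWP} would give well-posedness (uniformly continuous flow) in $H^s(\R^d)$ for $k+d/q<s<d/2$, contradicting the norm inflation of Theorem~\ref{theo} (part 2); the endpoint $p=2$ then requires a separate interpolation with $(p,q)=(\infty,2)$. You instead give a direct linear proof: testing the estimate on $u_\lambda(x)=\phi(\lambda x)$, using $\|S(t)u_\lambda\|_{L^q}=\lambda^{-d/q}\|e^{-itP(\lambda D)}\phi\|_{L^q}$ and the uniform bound $\|e^{-itP(\lambda D)}\phi-\phi\|_{L^q}\le C\|P\|_{L^\infty}|t|$ (Plancherel plus Hausdorff--Young/interpolation) to get a $\lambda$-independent lower bound $\gtrsim\tau^{1/p}\lambda^{-d/q}$ on a fixed time window, against $\|u_\lambda\|_{H^k}\lesssim\lambda^{k-d/2}$. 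This is sound, self-contained, covers the endpoint and even non-admissible pairs without extra work, and isolates exactly where boundedness of $P$ enters (the propagator stays $O(|t|)$-close to the identity uniformly in frequency). What it does not buy is the paper's point that the corollary can be \emph{deduced} from the nonlinear ill-posedness machinery already in place (Theorem~\ref{theo} and Proposition~\ref{prop:LWP}), which the author highlights as the interesting feature. Two cosmetic remarks: $\widehat{u_\lambda}(\xi)=\lambda^{-d}\widehat\phi(\xi/\lambda)$ is spread over $|\xi|\lesssim\lambda$ rather than concentrated at $|\xi|\sim\lambda$ (harmless, since you only use the exact scaling identities); and for $k<0$, $q>2$ the correct bound is $\|u_\lambda\|_{H^k}\lesssim\lambda^{-d/2}$, which still gives the contradiction $\lambda^{d/2-d/q}\le C$ as you indicate, while the case $q=2$ is handled by frequency-localized data as you say.
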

The fact that no standard Strichartz
estimate (with no loss) is available for $S(\cdot)$ is rather clear,
since the dispersion relation is given by $\tau=P(\xi)$, and defines a
characteristic variety which is bounded in $\tau$. However, one could
expect the existence of Strichartz estimates with loss of regularity,
in the same fashion as in \cite{Ant08,BSS08,BGT} (where the geometric framework
--- the space variable belongs to a compact manifold --- rules out the
existence of the standard 
dispersive properties). The fact that this is not so is
a rather direct consequence of Theorem~\ref{theo} (where $\si>0$ is
arbitrary), and of the argument given in \cite{BGT} to prove
Proposition~3.1. It may seem surprising to prove
Corollary~\ref{cor:noStri} as a consequence of a nonlinear analysis;
we insist on the fact that the proof of 
Theorem~\ref{theo} is rather simple, and the deduction of
Corollary~\ref{cor:noStri} involves another nonlinear result, whose
proof is also quite short (see Proposition~\ref{prop:LWP} below). 

\subsection{Critical cases}
\label{sec:crit}

 In \cite{CW90},  local well-posedness in $H^{s_c}(\R^d)$ for small
  data is established for Equation~\eqref{eq:gen} in the case
  $P(\xi)=-|\xi|^2$, where
  \begin{equation*}
    s_c=\frac{d}{2}-\frac{1}{\si}
  \end{equation*}
coincides with $s_0$ in that case, since $m=2$. 
 In \cite{NaOz98}, local well-posedness in $H^{d/2}(\R^d)$ for small
  data is established for the same operator, with nonlinearities which
  are  allowed to grow exponentially. In these two papers, the proof
  uses Strichartz estimates (in Besov 
  spaces). On the other hand, when $P$ is bounded, the Cauchy problem
  may be ill-posed in $H^{d/2}(\R^d)$, even for nonlinearities growing
  algebraically.  
Moreover, when $P$ is $m$-homogeneous with $m=1$, the critical Sobolev
regularity may not be  $s_0$, but $s_c=d/2>s_0$, with ill-posedness
for $s=s_c$. 

  \begin{proposition}\label{prop:ill}
   Let $\l\in \R\setminus \{0\}$, $\si>0$.  Assume that
   either $\si$ is an 
  integer, or that there exists an integer $r$ such that 
$2\si\ge r>d/2$. In 
   either of the two cases: 
    \begin{itemize}
    \item $P(\xi)=c\cdot \xi$, $c\in \R^d$, or
\item $P$ is constant, 
    \end{itemize}
for all $\delta >0$, there exists $u_0\in H^{d/2}(\R^d)$ with
$\|u_0\|_{H^{d/2}(\R^d)}\le \delta$ such that 
\eqref{eq:gen} has a unique solution $u\in C(\R_+;{\mathcal
  D}'(\R^d))$, and 
for any $t>0$,  $u(t,\cdot)\not \in H^{d/2}(\R^d)$.  
  \end{proposition}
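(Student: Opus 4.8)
The plan is to exploit the fact that in both cases the operator $P(D)$ acts trivially on Sobolev regularity, so that \eqref{eq:gen} reduces to a pointwise ordinary differential equation. When $P(\xi)=c\cdot\xi$ we have $P(D)=-ic\cdot\nabla$, and in the moving frame $v(t,x):=u(t,x-tc)$ the equation \eqref{eq:gen} becomes $i\d_t v=\l|v|^{2\si}v$; when $P\equiv\om$ is constant, writing $u=e^{i\om t}v$ yields the same reduced equation. In either case the change of unknown is, for each fixed $t$, an isometry of $H^s(\R^d)$ for every $s$, so that $u(t,\cdot)\in H^{d/2}$ if and only if $v(t,\cdot)\in H^{d/2}$. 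Hence it suffices to analyze $i\d_t v=\l|v|^{2\si}v$, $v_{\mid t=0}=u_0$.

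First I would solve the reduced equation explicitly. For each fixed $x$ the modulus $|v(t,x)|$ is conserved, since $\d_t|v|^2=2\RE(\bar v\,\d_t v)=0$, so the equation linearizes to $\d_t v=-i\l|u_0(x)|^{2\si}v$ and
\begin{equation*}
  v(t,x)=e^{-i\l t|u_0(x)|^{2\si}}u_0(x),\qquad u(t,x)=e^{-i\l t|u_0(x+tc)|^{2\si}}u_0(x+tc)
\end{equation*}
in the transport case (and the analogous formula, up to the unimodular factor $e^{i\om t}$, in the constant case). Since $|v(t,\cdot)|=|u_0|$, one has $v(t,\cdot)\in L^2(\R^d)$ for all $t$, with continuous dependence on $t$ in $L^2$ by dominated convergence; this yields a solution in $C(\R_+;\mathcal D'(\R^d))$. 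For uniqueness I would fall back on the pointwise ODE structure: the assumption on $\si$ (either $\si\in\N$, or $2\si\ge r>d/2$ for some integer $r$) guarantees that $z\mapsto|z|^{2\si}z$ is smooth enough for the reduced equation to be well posed and for every distributional solution to coincide with the explicit one, along the lines of Proposition~\ref{prop:LWP}.

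It then remains to choose $u_0$. I would take a radial logarithmic profile concentrated at a point, $u_0(x)=\eps\,\psi(x)\(\log\tfrac1{|x|}\)^{\beta}$, where $\psi$ is a smooth cut-off equal to $1$ near $0$ and supported in the unit ball, $\eps>0$ is a small constant, and $\beta$ lies in a window to be fixed. Such a profile is the prototype of a function belonging to $H^{d/2}(\R^d)$ while being unbounded: a direct computation shows $u_0\in H^{d/2}(\R^d)$ provided $\beta$ is below a critical value. Choosing $\eps$ small makes $\|u_0\|_{H^{d/2}}\le\delta$, which is harmless, since the value of $\eps$ (like that of $t$) only rescales the phase below and never restores integrability.

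The heart of the matter, and the step I expect to be the main obstacle, is to show that for every $t>0$ the modulated function $v(t,\cdot)=e^{-i\theta}u_0$, with $\theta(x)=\l t|u_0(x)|^{2\si}\sim C\eps^{2\si}t\(\log\tfrac1{|x|}\)^{2\si\beta}$, fails to be in $H^{d/2}(\R^d)$. The mechanism is that differentiating the oscillatory factor $e^{-i\theta}$ produces singular contributions involving powers of $\nabla\theta$, which are more singular than $u_0$ itself because each derivative of $\theta$ carries a factor $|x|^{-1}$. In the model case $d=2$ this is transparent: as $u_0$ is real,
\begin{equation*}
  |\nabla v(t,\cdot)|^2=|\nabla u_0|^2+|u_0|^2|\nabla\theta|^2,
\end{equation*}
and near the origin $|u_0|^2|\nabla\theta|^2\sim C\(\log\tfrac1{|x|}\)^{2\beta(2\si+1)-2}|x|^{-2}$, whose integral over a ball diverges as soon as $\beta(2\si+1)\ge\tfrac12$, while $\nabla u_0\in L^2$ requires only $\beta<\tfrac12$; the window $\tfrac1{2(2\si+1)}\le\beta<\tfrac12$ is nonempty precisely because $\si>0$. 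For general $d$ the same competition persists, but extracting the divergence requires a Littlewood--Paley (or fractional Sobolev) lower bound on $\|e^{-i\theta}u_0\|_{\dot H^{d/2}}$, together with a verification that the leading singular term is not cancelled by the remaining terms of the Faà di Bruno expansion. This is the only genuinely technical point, and it is here that the borderline nature of the exponent $s_c=d/2$ is used in full.
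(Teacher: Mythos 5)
Your proposal is correct and follows essentially the same route as the paper: reduction to the pointwise ODE $i\d_t v=\l|v|^{2\si}v$ via the moving frame (resp.\ the unimodular factor), the explicit phase-modulation formula for $v$, a small logarithmic profile $\(\log\frac1{|x|}\)^{\beta}$ near the origin as initial datum, and the observation that differentiating the phase $\l t|u_0|^{2\si}$ produces the divergent term $|u_0|^{2\si}\nabla u_0\notin L^2$ in the window $\frac{1}{4\si+2}\le\beta<\frac12$. The paper likewise treats $d=2$ in full and only sketches the higher-dimensional cases (induction on radial derivatives for $d$ even, the Gagliardo seminorm for $d$ odd), so your deferral of the Fa\`a di Bruno/Littlewood--Paley verification matches the level of detail actually given there.
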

For comparison with other results, note that in the first case, $P$ is
not elliptic if $d\ge 2$. In the second 
case, $P\not =0$ is elliptic, but $\nabla^2P=0$ is obviously
degenerate. 
\smallbreak

Unlike what happens in the presence of Strichartz estimates
(\cite{CW90,NaOz98}), this result yields examples where local
well-posedness fails in 
the critical case $s=s_c$, even for small data.  We prove
Proposition~\ref{prop:ill} in \S\ref{sec:ill}: we present the cases
$d=2$, $\si>0$, and $d\ge 1$, $\si=1/2$,  only, for the convenience of
the exposition, but the 
argument can be extended to any space dimension, up to more intricate
computations.
\smallbreak

In the case $m=1$ (at least), the mere assumption that $P$
is $m$-homogeneous is not enough to characterize the critical Sobolev
regularity in \eqref{eq:gen}, or the existence of Strichartz
estimates. Indeed, the symbol
$P(\xi)=|\xi|$ corresponds to the wave equation, for which Strichartz
estimates are available when $d\ge 2$, and so $s_c<d/2$. See
e.g. \cite{GV95}. This suggests that also when
$m>1$, the value $s_0$ may 
not be sharp when $P$ is not proportional to $|\xi|^m$, but for
instance of the form $|\xi|^{m-1}c\cdot \xi$, $c\in \R^d$, or more
generally when $P$ is not elliptic; see also \cite{BAKS03,BAS99,KPV91}
for remarks in this direction.

\section{Proof of Theorem~\ref{theo}}
\label{sec:theo}

The proof of Theorem~\ref{theo} proceeds along the same lines as in
\cite{BGTENS,CCT2} (see also \cite{TaoDisp}). Fix $s$ as in
Theorem~\ref{theo}. Consider initial data of
the form
\begin{equation*}
  u^h_0(x) = h^{s-d/2} \kappa^ha_0\(\frac{x}{h}\), 
\end{equation*}
with $0<h\ll 1$,  $a_0(x)=e^{-|x|^2}$, and
\begin{equation*}
  \kappa^h = \(\log \frac{1}{h}\)^{-\theta}
\end{equation*}
for some $\theta>0$ to be fixed later. We have
$\|u_0^h\|_{H^s(\R^d)} \Tend h 0 0 $. 
Introduce the scaling
\begin{equation*}
  \psi(\tau,y) = h^{d/2-s}u^h\( h^{2+\alpha}\tau,hy\),
\end{equation*}
for some $\alpha$ to be precised later:
\begin{equation*}
  \|\psi(\tau)\|_{\dot H^s(\R^d)}  = 
  \|u^h\(h^{2+\alpha}\tau\)\|_{\dot H^s(\R^d)}. 
\end{equation*}
Denote  $\eps = h^{2\si(d/2-s)-2-\alpha}$. The function $\psi$ solves the
Cauchy problem
\begin{equation}\label{eq:psi}
      i\eps\d_\tau \psi + h^{2\si(d/2-s)}P\(h^{-1}D_y\)\psi
  = \l|\psi|^{2\si}\psi;\quad
 \psi_{\mid \tau=0} = \kappa^ha_0. 
\end{equation}
\subsection{Choice of $\alpha$}
\label{sec:homo}

When $P$ is $m$-homogeneous, Equation~\eqref{eq:psi} simplifies to 
\begin{equation*}
  i\eps\d_\tau \psi + h^{2\si(d/2-s)- m}P(D_y)\psi = \l|\psi|^{2\si}\psi;\quad
 \psi_{\mid \tau=0} = \kappa^ha_0.
\end{equation*}
For $\omega>0$, we set
\begin{equation*}
  2+\alpha =
  \frac{1}{m+\omega}\(\(m-1+\omega\)2\si\(\frac{d}{2}-s\)+m\),
\end{equation*}
in which case we have:
\begin{equation*}
  \eps = h^{2\si(s_0-s)/(m+\omega)}. 
\end{equation*}
Therefore, $\eps\to 0$ as $h\to 0$ since $s<s_0$. 
We also compute
\begin{equation*}
  h^{2\si(d/2-s)- m} = \eps^{m+\omega}.
\end{equation*}
\smallbreak

When $P$ is bounded, we consider $2+\alpha =
\si(d/2-s)$. Therefore,
\begin{equation*}
  2\si\(\frac{d}{2}-s\)-2-\alpha>0\quad \text{(hence }\eps\to 0
  \text{ as }h\to 0\text{)},\quad \text{and }2+\alpha>0.
\end{equation*}
\subsection{The ODE approximation}
\label{sec:ode}
Introduce the solution to 
\begin{equation*}
  i\eps \d_\tau \varphi = \l|\varphi|^{2\si}\varphi;\quad
 \varphi_{\mid \tau=0} = \kappa^h a_0. 
\end{equation*}
It is given by
\begin{equation*}
  \varphi(\tau,y) = \kappa^h a_0(y)\exp\( -i\l \frac{\tau}{\eps}
  \(\kappa^h\)^{2\si}|a_0(y)|^{2\si}\). 
\end{equation*}
Since $a_0$ is a Gaussian, $\varphi\in C^\infty(\R\times\R^d)$
regardless of $\si>0$, and
for any $r\ge 0$,  
\begin{equation}\label{eq:estphi}
  \|\varphi(\tau)\|_{H^r(\R^d)} \lesssim \(\kappa^h\)^{1+
    2\si r} \(\frac{\tau}{\eps}\)^{r} + \kappa^h. 
\end{equation}

\begin{lemma}\label{lem:ode}
Let $r>d/2$ be an integer, with in addition $r\le 2\si$ if
$\si\not\in\N$. In either 
of the cases of Theorem~\ref{theo}, we can find 
$\delta>0$ independent of $\theta>0$ such that
  \begin{equation*}
   \sup_{0\le \tau \le \eps \(\log
    \frac{1}{\eps}\)^{\delta}}
 \|\psi(\tau)-\varphi(\tau)\|_{H^r(\R^d)}\Tend \eps 0 0 .
  \end{equation*}
\end{lemma}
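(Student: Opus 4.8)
The plan is to set $w=\psi-\varphi$ and run a Gronwall/bootstrap argument on $\|w\|_{H^r}$, exploiting that $H^r$ is an algebra ($r>d/2$) and that the dispersive operator in \eqref{eq:psi} carries a large negative power of $h$, i.e.\ a positive power of $\eps$. Writing $L_h$ for the Fourier multiplier standing in front of $\psi$ in \eqref{eq:psi}, subtracting the two equations gives
\begin{equation*}
  i\eps\d_\tau w = -L_h w - L_h\varphi + \l\(|\psi|^{2\si}\psi-|\varphi|^{2\si}\varphi\),\quad w_{\mid\tau=0}=0.
\end{equation*}
First I would perform an $H^r$ energy estimate. Since $P$ is real-valued, $L_h$ is a real Fourier multiplier, hence self-adjoint on $H^r$ and commuting with $\<D\>^r$; therefore $\RE\<iL_h w,w\>_{H^r}=0$ and the term $L_h w$ — unbounded on $H^r$ in the homogeneous case — drops out entirely. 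This yields
\begin{equation*}
  \eps\,\d_\tau\|w\|_{H^r}\le \|L_h\varphi\|_{H^r}+|\l|\,\big\||\psi|^{2\si}\psi-|\varphi|^{2\si}\varphi\big\|_{H^r}.
\end{equation*}

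For the nonlinear difference I would use that $H^r$ is an algebra together with the smoothness of $z\mapsto|z|^{2\si}z$: when $\si\in\N$ this map is polynomial in $(z,\bar z)$, and when $\si\notin\N$ the hypothesis $r\le 2\si$ gives it enough regularity to run a tame (Moser-type) estimate, so that $\||\psi|^{2\si}\psi-|\varphi|^{2\si}\varphi\|_{H^r}\lesssim(\|\psi\|_{H^r}+\|\varphi\|_{H^r})^{2\si}\|w\|_{H^r}$. For the source term $L_h\varphi$ I would treat the two cases separately. In the $m$-homogeneous case $L_h=\eps^{m+\om}P(D_y)$ loses $m$ derivatives but carries the factor $\eps^{m+\om}$, so by \eqref{eq:estphi} (applied with $r+m$) one gets $\|L_h\varphi\|_{H^r}\lesssim\eps^{m+\om}\|\varphi\|_{H^{r+m}}$, controlled polynomially in $\tau/\eps$. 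In the bounded case the choice $2+\alpha=\si(d/2-s)$ makes the coefficient equal to $\eps^2$, while $P(h^{-1}D_y)$ is bounded on $H^r$ uniformly in $h$, so $\|L_h\varphi\|_{H^r}\lesssim\eps^2\|\varphi\|_{H^r}$. In either case $\frac1\eps\int_0^\tau\|L_h\varphi\|_{H^r}\,ds$ is bounded by a fixed positive power of $\eps$ times a power of $\log\frac1\eps$ (using $\kappa^h\le1$ and $\tau/\eps\le(\log\frac1\eps)^\delta$), hence tends to $0$ faster than any power of $\log\frac1\eps$.

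Finally I would close a continuity/bootstrap argument: on the maximal interval where $\|w\|_{H^r}\le\|\varphi\|_{H^r}$ one has $\|\psi\|_{H^r}\le2\|\varphi\|_{H^r}$, so Gronwall's lemma gives $\|w(\tau)\|_{H^r}\le\big(\frac1\eps\int_0^\tau\|L_h\varphi\|_{H^r}\,ds\big)\exp\big(\frac1\eps\int_0^\tau B\,ds\big)$ with $B\lesssim\|\varphi\|_{H^r}^{2\si}$. The main obstacle is precisely this Gronwall amplification: from \eqref{eq:estphi}, $\frac1\eps\int_0^\tau B\,ds\lesssim(\log\frac1\eps)^{\delta(2\si r+1)}$, which diverges for every $\delta>0$. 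The resolution — and the reason $\delta$ may be taken independent of $\theta$ — is to choose $\delta<1/(2\si r+1)$, so that the exponent is $o(\log\frac1\eps)$ and the amplification $\exp\big((\log\frac1\eps)^{\delta(2\si r+1)}\big)$ grows more slowly than any power of $1/\eps$; this is then beaten by the genuine power of $\eps$ in the prefactor, giving $\|w(\tau)\|_{H^r}\to0$ uniformly on $[0,\eps(\log\frac1\eps)^\delta]$. Since this limit is $\ll\kappa^h$ while $\|\varphi(\tau)\|_{H^r}\ge\|\varphi(\tau)\|_{L^2}=\kappa^h\|a_0\|_{L^2}$ (the ODE flow preserves $|\varphi|$ pointwise), the bootstrap assumption is strictly improved, so the solution $\psi$ — which exists locally in $H^r$ because $H^r$ is an algebra — persists up to the final time and the claimed convergence holds in both cases of Theorem~\ref{theo}.
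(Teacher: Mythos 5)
Your proposal is correct and follows essentially the same route as the paper: estimate $w=\psi-\varphi$ in $H^r$ using that the real multiplier contributes nothing to the energy, bound the source $L_h\varphi$ by a positive power of $\eps$ via the scaling choices and \eqref{eq:estphi}, and close a Gronwall/bootstrap argument by taking $\delta$ so small that the exponential amplification is $\exp\bigl(o(\log\frac1\eps)\bigr)$, hence sub-polynomial in $1/\eps$ and independent of $\theta$. The only differences (a differential energy inequality instead of Duhamel with the unitary group, and the bootstrap threshold $\|\varphi\|_{H^r}$ instead of $1$) are cosmetic.
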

\begin{proof}
  Denote by $w=\psi-\varphi$ the error. It solves
  \begin{align*}
    i\eps\d_\tau w + h^{2\si(d/2-s)}P\(h^{-1}D_y\)w &=
    h^{2\si(d/2-s)}P\(h^{-1}D_y\)\varphi \\
    &\quad +\l\(|w+\varphi|^{2\si}(w+\varphi)-|\varphi|^{2\si}\varphi \),
  \end{align*}
with $w_{\mid \tau=0}=0$. Using the facts that $P$ is real-valued,
$z\mapsto |z|^{2\si}z$ is sufficiently smooth,  and
$H^r(\R^d)$ is an algebra, we find
\begin{align*}
  \|w(\tau)\|_{H^r(\R^d)} &\lesssim \frac{1}{\eps}\int_0^\tau \left\|
    h^{2\si(d/2-s)}P\(h^{-1}D_y\)\varphi (\tau')\right\|_{H^r(\R^d)}d\tau' \\
&\quad+
\frac{1}{\eps}  \int_0^\tau \(\|w(\tau')\|_{H^r(\R^d)}^{2\si} +
  \|\varphi(\tau')\|_{H^r(\R^d)}^{2\si}
  \)\|w(\tau')\|_{H^r(\R^d)}d\tau'.  
\end{align*}
In the case where $P$ is $m$-homogeneous, we have
\begin{align*}
  \left\|
    h^{2\si(d/2-s)}P\(h^{-1}D_y\)\varphi (\tau')\right\|_{H^r(\R^d)}
  & \lesssim  
h^{2\si(d/2-s)-m}\left\|
    \varphi (\tau')\right\|_{H^{r+m}(\R^d)}\\
&\lesssim h^{2\si(s_0-s)}\left\|
    \varphi (\tau')\right\|_{H^{r+m}(\R^d)}. 
\end{align*}
In the case where $P$ is bounded, we have 
\begin{align*}
  \left\|
    h^{2\si(d/2-s)}P\(h^{-1}D_y\)\varphi (\tau')\right\|_{H^r(\R^d)}
  & \lesssim  
h^{2\si(d/2-s)}\left\|
    \varphi (\tau')\right\|_{H^{r}(\R^d)}\\
&\lesssim h^{2\si(s_0-s)}\left\|
    \varphi (\tau')\right\|_{H^{r}(\R^d)}, 
\end{align*}
where we set $s_0=d/2$ in this case. 
\smallbreak

In both cases, we check that there exists $\beta>0$ (independent of
$\theta$) such that 
\begin{equation*}
  h^{2\si(s_0-s)} =\eps^{1+\beta}.
\end{equation*}
It is given by the formula
\begin{equation*}
  \beta = \frac{2\si(s_0-d/2)+2+\alpha}{2\si(d/2-s)-2-\alpha}. 
\end{equation*}
In the homogeneous case, this formula becomes $\beta= m-1+\omega$, and
when $P$ is bounded, $\beta=1$.
Therefore, in view of
\eqref{eq:estphi} and since $\kappa^h\le 1$, there exist $\beta,\gamma
>0$ such that 
\begin{equation*}
  \left\|
    h^{2\si(d/2-s)}P\(h^{-1}D_y\)\varphi (\tau')\right\|_{H^r(\R^d)}
  \le \eps^{1+\beta}\( 
  \(\frac{\tau}{\eps}\)^\gamma + 1\) .
\end{equation*}
So long as $\|w(\tau)\|_{H^r(\R^d)}\le 1$, with $\tau$ as above, we
infer from \eqref{eq:estphi}:
\begin{equation*}
  \|w(\tau)\|_{H^r(\R^d)}\lesssim \int_0^\tau \eps^\beta\( 
  \(\frac{\tau'}{\eps}\)^\gamma + 1\)d\tau' + \frac{1}{\eps}\int_0^\tau
  \(1+\(\frac{\tau'}{\eps}\)^r\)\|w(\tau')\|_{H^r(\R^d)}d\tau' . 
\end{equation*}
Gronwall's Lemma yields
\begin{equation*}
 \|w(\tau)\|_{H^r(\R^d)}  \lesssim  \eps^\beta\( 
  \(\frac{\tau}{\eps}\)^\gamma + 1\)e^{C\tau/\eps +C(\tau/\eps)^{r+1}}\lesssim
  \eps^\beta e^{2C\tau/\eps+C(\tau/\eps)^{r+1} }.
\end{equation*}
By choosing
$\delta>0$ sufficiently small, the right hand side is controlled by,
say, $\eps^{\beta/2}$, for all $0\le \tau\le \eps
\(\log\frac{1}{\eps}\)^{\delta}$. The condition $\|w(\tau)\|_{H^r(\R^d)}\le 1$ is
verified for such times $\tau$, provided that
$\eps$ is sufficiently small, and the lemma follows. 
\end{proof}

\subsection{Conclusion}
Let $r>d/2$ as in Lemma~\ref{lem:ode}. With $\delta>0$ given by
Lemma~\ref{lem:ode}, we have, 
since $s<d/2$: 
\begin{align*}
  \left\|u^h\(h^{2+\alpha}\eps\(\log
    \frac{1}{\eps}\)^\delta\)\right\|_{H^s(\R^d)}&\ge
\left\|\varphi\(\eps\(\log
    \frac{1}{\eps}\)^\delta\)\right\|_{H^s(\R^d)}\\
&  - C\left\|\varphi\(\eps\(\log
    \frac{1}{\eps}\)^\delta\)- \psi\(\eps\(\log
    \frac{1}{\eps}\)^\delta\)\right\|_{H^r(\R^d)}.
\end{align*}
On the other hand, similar to \eqref{eq:estphi}, we have:
\begin{equation*}
  \|\varphi(\tau)\|_{H^s(\R^d)} \gtrsim \(\kappa^h\)^{1+
    2\si s} \(\frac{\tau}{\eps}\)^{s} -C \kappa^h, 
\end{equation*}
and so,
\begin{equation*}
 \left\|\varphi\(\eps\(\log
    \frac{1}{\eps}\)^\delta\)\right\|_{H^s(\R^d)}\ge C
  \(\log\frac{1}{\eps}\)^{s\delta -\theta -2\si \theta s}-o(1). 
\end{equation*}
For $\theta>0$ sufficiently small, $s\delta -\theta -2\si \theta s>0$,
and Lemma~\ref{lem:ode} yields
\begin{equation*}
  \left\|u^h\(h^{2+\alpha}\eps\(\log
    \frac{1}{\eps}\)^\delta\)\right\|_{H^s(\R^d)}\Tend h 0
  +\infty. 
\end{equation*}
Theorem~\ref{theo} follows, with
\begin{equation*}
  t^h = h^{2+\alpha}\eps\(\log
    \frac{1}{\eps}\)^\delta= C h^{2\si(d/2-s)} \(\log
    \frac{1}{h}\)^\delta\Tend h 0 0 . 
\end{equation*}
\section{Proof of Corollary~\ref{cor:noStri}}
\label{sec:cor}

We argue by contradiction, by using a slight generalization of
\cite[Proposition~3.1]{BGT}. 
\begin{definition}[From \cite{KPV01}]
\label{def:WP}
Let $s \in \R$. The Cauchy problem \eqref{eq:gen}
is well posed in $H^{s}(\R^d)$  if, for all bounded
subset $B\subset H^{s}(\R^d)$, there exist  
$T>0$ and a Banach space $X_T$ continuously embedded into
$C([0,T];H^s(\R^d))$ 
such that for all $u_0\in H^s(\R^d)$, \eqref{eq:gen}  
has a unique solution $u \in X_T$,
and the mapping $u_0
  \mapsto u$ is uniformly continuous from $ (B,\|
  \cdot \|_{H^s})$ to $C([0,T];H^s(\R^d))$.
\end{definition}
\begin{proposition}\label{prop:LWP}
  Let $d\ge 1$, $P:\R^d\to \R$. Suppose that there
  exist an admissible pair $(p,q)$, an index $k<2/p=d/2-d/q$, $T_0>0$,
  and  a constant $C>0$ such that 
  \begin{equation}\label{eq:stri}
    \|S(\cdot)u_0\|_{L^p([0,T_0];L^q(\R^d))}\le C\|u_0\|_{H^k(\R^d)},\quad
    \forall u_0\in H^k(\R^d). 
  \end{equation}
Then for all 
\begin{equation*}
k+\frac{d}{q}<s<\frac{d}{2},\quad   0<\si<\frac{p}{2}, 
\end{equation*}
 the Cauchy problem for \eqref{eq:gen}
is well posed in $H^{s}(\R^d)$. 
\end{proposition}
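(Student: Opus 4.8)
The plan is to prove local well-posedness (in the sense of Definition~\ref{def:WP}) via a standard contraction-mapping argument on the Duhamel formulation
\begin{equation*}
  u(t) = S(t)u_0 - i\l \int_0^t S(t-t')|u(t')|^{2\si}u(t')\,dt',
\end{equation*}
using the hypothesis \eqref{eq:stri} as a substitute for the usual (loss-free) Strichartz estimate. Because \eqref{eq:stri} carries a derivative loss $k<2/p$, one cannot close the estimates purely in the Strichartz norm $L^p([0,T];L^q)$; instead I would work in the resolution space
\begin{equation*}
  X_T = C([0,T];H^s(\R^d)) \cap L^p([0,T];L^q(\R^d)),
\end{equation*}
equipped with the natural norm, and control the nonlinear term by interpolating between the energy bound in $H^s$ and the Strichartz integrability in $L^q$. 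The choice of $s$ with $k+d/q<s<d/2$ is dictated precisely by this: the condition $s<d/2$ keeps $H^s$ below the algebra threshold (so we genuinely need the extra $L^q$ integrability), while $s>k+d/q$ ensures that differentiating \eqref{eq:stri} $s$ times and paying the Sobolev/derivative loss still leaves room to absorb the $L^q$-norm produced by the nonlinearity.

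The key steps, in order, are as follows. First I would record the consequences of \eqref{eq:stri}: the homogeneous estimate $\|S(\cdot)u_0\|_{L^p([0,T_0];L^q)}\lesssim\|u_0\|_{H^k}$ together with the Christ--Kiselev lemma yields the corresponding inhomogeneous (retarded) estimate for the Duhamel integral, namely $\|\int_0^t S(t-t')F(t')\,dt'\|_{L^p([0,T];L^q)}\lesssim\|F\|_{L^{p'}([0,T];H^{k'})}$ for a suitable dual exponent, with $T\le T_0$; one must check $p>2$ so that Christ--Kiselev applies, which is guaranteed by admissibility once we exclude the endpoint. Second, I would estimate the nonlinearity $N(u)=|u|^{2\si}u$ by combining a fractional Leibniz (Kato--Ponce) estimate in $H^s$ with Hölder in time, splitting the $2\si$ factors between the $L^q$ Strichartz norm and the $H^s$ energy norm; the hypothesis $\si<p/2$ is exactly what makes the time integration summable with a positive power of $T$, giving a small factor $T^{\eta}$ in front. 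Third, I would show that $u\mapsto S(t)u_0-i\l\int_0^t S(t-t')N(u)$ is a contraction on a ball of $X_T$ for $T$ small depending only on $\|u_0\|_{H^s}$, which gives existence, uniqueness in $X_T$, and—since the estimates are bilinear/multilinear and stable—uniform (indeed Lipschitz) continuity of the data-to-solution map on bounded sets, as required by Definition~\ref{def:WP}.

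The main obstacle I anticipate is the nonlinear estimate with the derivative loss: because the Strichartz bound only controls $S(\cdot)u_0$ in $L^q$ after losing $s-k-d/q$ derivatives relative to the loss-free case, the Duhamel term must be estimated in $H^s$ using the \emph{energy} inequality $\|\int_0^t S(t-t')F\|_{L^\infty_T H^s}\le\int_0^T\|F\|_{H^s}$ (valid since $S$ is unitary on $H^s$) rather than any dispersive gain, and only the $L^p_T L^q$ component of the norm can exploit \eqref{eq:stri}. Making these two controls compatible—ensuring that the $H^s$-derivatives falling on $N(u)$ are paired with the energy norm while the remaining $2\si$ low-regularity factors are absorbed into the $L^q$ Strichartz norm without exceeding the available integrability—is the delicate point, and it is where the two constraints $s<d/2$ and $\si<p/2$ must be used simultaneously and sharply. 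Once this multilinear estimate is in place, the contraction and the verification of Definition~\ref{def:WP} are routine.
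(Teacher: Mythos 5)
There is a genuine gap: your resolution space is too weak to close the nonlinear estimate. You propose $X_T=C([0,T];H^s)\cap L^p([0,T];L^q)$ with no derivatives in the Strichartz component, and you correctly identify the resulting difficulty --- pairing the $s$ derivatives falling on $|u|^{2\si}u$ with the energy norm while absorbing the remaining $2\si$ factors into the $L^q$ norm --- but you do not resolve it, and in this space it cannot be resolved. Any fractional Leibniz/Moser estimate of $\||u|^{2\si}u\|_{H^s}$ that puts all $s$ derivatives on one factor in $L^2$ forces the other $2\si$ factors into $L^\infty$ (H\"older with $1/2=1/2+0$); distributing the exponents differently requires $\|\langle D\rangle^s u\|_{L^b}$ for some $b>2$, which is controlled neither by $H^s$ (since $s<d/2$) nor by $L^q$. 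Interpolating $L^q$ against $H^s\hookrightarrow L^{2d/(d-2s)}$ only ever produces finite Lebesgue exponents, never $L^\infty$. The missing device is to put $\ell=s-k$ derivatives into the Strichartz slot, i.e.\ to work in $C([0,T];H^s)\cap L^p([0,T];W^{\ell,q})$: the hypothesis $s>k+d/q$ is exactly $\ell>d/q$, so $W^{\ell,q}\hookrightarrow L^\infty$ and $X_T\subset L^p([0,T];L^\infty)$. Then the elementary Moser bound $\||u|^{2\si}u\|_{H^s}\lesssim\|u\|_{L^\infty}^{2\si}\|u\|_{H^s}$ combined with H\"older in time (using $\si<p/2$ to get the factor $T^{1-2\si/p}$) closes the contraction. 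The estimate \eqref{eq:stri} controls the $W^{\ell,q}$ norm of the free evolution of $H^s$ data because $\langle D\rangle^{\ell}$ commutes with $S(t)$ and $k+\ell=s$.

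A secondary point: the Christ--Kiselev step you propose is both unnecessary and not clearly correct as stated. The $TT^*$/retarded estimate you want, $\|\int_0^tS(t-t')F\,dt'\|_{L^pL^q}\lesssim\|F\|_{L^{p'}H^{k'}}$, does not follow cleanly from \eqref{eq:stri} and its dual, since $T$ takes $H^k$ input while $T^*$ outputs $H^{-k}$, so the composition carries an extra $2k$ derivatives that must be accounted for. Since the nonlinear term is in any case estimated in $L^1([0,T];H^s)$, Minkowski's integral inequality applied to $\int_0^tS(t-\tau)F(\tau)\,d\tau$ together with the homogeneous estimate \eqref{eq:stri} (applied to $S(\cdot-\tau)F(\tau)$ for each fixed $\tau$) already bounds the $L^p([0,T];W^{\ell,q})$ norm of the Duhamel term by $\int_0^T\|F(\tau)\|_{H^s}\,d\tau$, with no Christ--Kiselev lemma and no restriction to $p>2$ at this stage.
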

Since in Theorem~\ref{theo}, we can always consider $\si=1$, 
Theorem~\ref{theo} and Proposition~\ref{prop:LWP} imply
Corollary~\ref{cor:noStri} in the non-endpoint case 
$p>2$. The endpoint case then follows by interpolation with the case
$(p,q)=(\infty,2)$: if an endpoint Strichartz estimate (with some
loss) was available, then an non-endpoint would be as well.  
\begin{proof}
For $0<T\le T_0$, introduce
  \begin{equation*}
    X_T = C\([0,T];H^s(\R^d)\)\cap L^p\([0,T];W^{\ell,q}(\R^d)\), 
  \end{equation*}
where  $\ell = s-k$. By assumption, $\ell >d/q$, so we have
\begin{equation*}
  X_T\subset L^p\([0,T];L^\infty(\R^d)\). 
\end{equation*}
This space is equipped with the norm
\begin{equation*}
  \|u\|_{X_T} = \sup_{0\le t\le T}\|u(t)\|_{H^s(\R^d)} + \left\|
    (1-\Delta)^{\ell/2}u\right\|_{L^p([0,T];L^q(\R^d))}. 
\end{equation*}
We construct the solution to \eqref{eq:gen} by a fixed point
argument. Set
\begin{equation*}
  \Phi(u)(t) = S(t)u_0- i\l\int_0^t
  S(t-\tau)\(|u(\tau)|^{2\si}u(\tau)\)d\tau. 
\end{equation*}
We prove that for
$T\in]0,T_0]$ sufficiently small, $\Phi$ is a 
contraction on some ball of $X_T$ centered at the origin. In view of
\eqref{eq:stri} and Minkowski inequality,
\begin{align*}
  \left\|\Phi(u)\right\|_{X_T} &\lesssim \|u_0\|_{H^s(\R^d)} +
  \int_0^t\left\| |u(\tau)|^{2\si}u(\tau) \right\|_{H^s(\R^d)} d\tau\\
&\lesssim \|u_0\|_{H^s(\R^d)} +
  \int_0^t\left\|
    u(\tau)\right\|_{L^\infty(\R^d)}^{2\si}\left\|u(\tau)
  \right\|_{H^s(\R^d)} d\tau \\
&\lesssim \|u_0\|_{H^s(\R^d)} +
  T^{\gamma}\left\|
    u\right\|_{L^p([0,T];L^\infty(\R^d))}^{2\si}\left\|u
  \right\|_{L^\infty([0,T];H^s(\R^d))}, 
\end{align*}
with $\gamma = 1-2\si/p>0$. Therefore,
\begin{equation*}
\left\|\Phi(u)\right\|_{X_T}  \le C\||u_0\|_{H^s(\R^d)} +
  C T^{\gamma}\left\|
    u\right\|_{X_T}^{2\si+1}.
\end{equation*}
Similarly, 
\begin{equation*}
\left\|\Phi(u)-\Phi(v)\right\|_{X_T}  \le 
  C T^{\gamma}\(\left\|
    u\right\|_{X_T}^{2\si}+\left\|
    v\right\|_{X_T}^{2\si}\)\left\|
    u-v\right\|_{X_T}.
\end{equation*}
This yields the local well-posedness result stated in
Proposition~\ref{prop:LWP}. 
\end{proof}

\section{Ill-posedness}
\label{sec:ill}

The key remark is that all the cases of Proposition~\ref{prop:ill}
boil down to an ordinary differential equation mechanism. Denote by
$v$ the solution to  
\begin{equation*}
  i\d_t v = \l|v|^{2\si}v;\quad v_{\mid t=0}=u_0. 
\end{equation*}
When $P(\xi)=c\cdot \xi$, we have
\begin{equation*}
  u(t,x) = v\(t, x+ct\),
\end{equation*}
and when $P(\xi)=c$, we have
\begin{equation*}
  u(t,x) =v(t,x)e^{ict},
\end{equation*}
so it suffices to prove Proposition~\ref{prop:ill} in the case $P=0$. 
For fixed $x\in \R^d$, we have
\begin{equation}\label{eq:v}
  v(t,x) = u_0(x)e^{-i\l t|u_0(x)|^{2\si}}.
\end{equation}
The idea is then that $H^{d/2}(\R^d)$ is not an algebra. 
\smallbreak

Consider 
  \begin{equation}\label{eq:sing}
    u_0(x) = \delta\times\(\log \frac{1}{|x|}\)^{\alpha}\chi \(|x|^2\), \quad
    x\in \R^d,
  \end{equation}
with $\chi\in C_0^\infty(\R)$, $\chi=1$ near the origin, and ${\rm
  supp }\chi\subset ]-1,1[$. We compute
\begin{equation*}
  \nabla v(t,x) = e^{-i\l t|u_0(x)|^{2\si}}\nabla u_0(x) -2i\si\l t
  |u_0(x) |^{2\si}\nabla u_0(x). 
\end{equation*}
We split the proof into three cases: for $d=2$, the proof is
straightforward, for $d\ge 4$ even, the proof is similar but we omit
the details of computations, and for $d$ odd, we simply sketch the
argument. 

\subsection*{Case $d=2$}
First, $u_0\in H^1(\R^2)$ provided that $\alpha<1/2$. Now
Proposition~\ref{prop:ill} follows if we can choose $\alpha<1/2$ so
that $|u_0|^{2\si}\nabla u_0\not \in L^2(\R^2)$. Near the origin, we
have, leaving out the constants,
\begin{equation*}
  \left\lvert |u_0(x) |^{2\si}\nabla u_0(x)\right\rvert^2 \approx 
  \frac{1}{|x|^2}\(\log |x|\)^{4\alpha\si +  2\alpha-2}.
\end{equation*}
The right hand side fails to be in $L^1_{\rm loc}(\R^2)$ if we impose
$4\alpha\si +  2\alpha \ge 1$. So Proposition~\ref{prop:ill} follows, with
\begin{equation*}
  \frac{1}{4\si+2}\le \alpha<\frac{1}{2}.
\end{equation*}

\subsection*{Case $d\ge 4$ even}
The argument is the same as in the case $d=2$, with more computations
that we simply sketch. We check by induction that
for $k\ge 1$, there exist coefficients $(\beta_{jk})_{1\le j\le k}$
such that near the origin,
\begin{equation*}
  \d_r^ku_0(x) =\frac{1}{r^k}\sum_{j=1}^k \beta_{jk}\(\log
  \frac{1}{|x|}\)^{\alpha-j} ,\quad\text{with } \beta_{1k}=
  (-1)^{k-1}(k-1)!\alpha. 
\end{equation*}
Therefore, the asymptotic behavior of $\d_r^ku_0$ near the origin is given
by:
\begin{equation*}
  \d_r^ku_0(x)\Eq r 0 (-1)^{k-1}(k-1)!\frac{\alpha}{r^k}\(\log
  \frac{1}{|x|}\)^{\alpha-1}.
\end{equation*}
Like in the case $d=2$, $u_0\in H^{d/2}(\R^d)$ provided that
$\alpha<1/2$. We compute, for $t>0$, and as $x\to 0$:
\begin{equation}\label{eq:vasym}
  |\d_r^k v(t,x)| =\frac{1}{r^k}\(c_k(t)\(\log
  \frac{1}{|x|}\)^{\gamma_k}+
\O\(\(\log \frac{1}{|x|}\)^{\gamma_k-\omega_k}\)\), 
\end{equation}
with $c_k(t)> 0$, $\omega_k>0$, and
\begin{equation*}
  \gamma_k = \max\( (2\si k+1)\alpha -k,(2\si+1)\alpha-1\). 
\end{equation*}
For $t>0$, $v(t,\cdot)\not \in
H^{d/2}(\R^d)$ if, for $k=d/2$, the first term in \eqref{eq:vasym} is
almost in $L^2_{\rm loc}(\R^d)$, but not quite: we choose $\alpha$ so
that $2\gamma_{d/2}=-1$. We find (like for $d=2$)
\begin{equation*}
  \alpha = \frac{1}{4\si+2},
\end{equation*}
which is consistent with the requirement
$\alpha<1/2$. Thus, the first term
in \eqref{eq:vasym} is not in 
$L^2_{\rm loc}(\R^d)$ due to a logarithmic 
divergence, while the remainder term is in $L^2_{\rm loc}(\R^d)$,
since $\omega_k>0$. 

\subsection*{The case when $d$ is odd} We keep $u_0$ of the same form
as in even dimensions, since we have found a value for $\alpha$ which
does not depend on $d$ even:
\begin{equation*}
    u_0(x) = \delta\times\(\log \frac{1}{|x|}\)^{1/(4\si+2)}\chi \(|x|^2\), \quad
    x\in \R^d.
  \end{equation*}
Recall the characterization of $H^s(\R^d)$ when $s\in ]0,1[$: a
function $f\in L^2(\R^d)$ belongs to $H^s(\R^d)$, $s\in ]0,1[$, if and
only if
\begin{equation*}
  \iint_{\R^d\times\R^d}
  \frac{\left|f(x)-f(x+y)\right|^2}{|y|^{d+2s}}dxdy<\infty. 
\end{equation*}
When $d=1$, we check that $u_0\in H^{1/2}(\R)$. We can also
check that for $t>0$, $v(t,\cdot)\not \in H^{1/2}(\R)$. 
\smallbreak

When $d\ge 3$, we compute $\d_r^ku_0$ and $\d_r^k v$ in the same
fashion as above, and check that
\begin{equation*}
  \nabla^{[d/2]}u_0\in H^{1/2}(\R^d),\quad \text{and for }t>0,\quad
  \nabla^{[d/2]}v(t,\cdot)\not\in H^{1/2}(\R^d) .
\end{equation*}
We leave out the details, since the
technicalities are more involved than in the even dimensional case,
and we believe that proving 
Proposition~\ref{prop:ill} in details is not worth
such an effort.  

\subsection*{Acknowledgement}
The author is grateful to Valeria Banica for valuable comments
on this work. 

\bibliographystyle{siam}
\bibliography{disp}

\begin{thebibliography}{10}

\bibitem{Ant08}
{\sc R.~Anton}, {\em Strichartz inequalities for {L}ipschitz metrics on
  manifolds and nonlinear {S}chr\"odinger equation on domains}, Bull. Soc.
  Math. France, 136 (2008), pp.~27--65.

\bibitem{BAKS00}
{\sc M.~Ben-Artzi, H.~Koch, and J.-C. Saut}, {\em Dispersion estimates for
  fourth order {S}chr\"odinger equations}, C. R. Acad. Sci. Paris S\'er. I
  Math., 330 (2000), pp.~87--92.

\bibitem{BAKS03}
\leavevmode\vrule height 2pt depth -1.6pt width 23pt, {\em Dispersion estimates
  for third order equations in two dimensions}, Comm. Partial Differential
  Equations, 28 (2003), pp.~1943--1974.

\bibitem{BAS99}
{\sc M.~Ben-Artzi and J.-C. Saut}, {\em Uniform decay estimates for a class of
  oscillatory integrals and applications}, Differential Integral Equations, 12
  (1999), pp.~137--145.

\bibitem{BSS08}
{\sc M.~Blair, H.~Smith, and C.~Sogge}, {\em On {S}trichartz estimates for
  {S}chr\"odinger operators in compact manifolds with boundary}, Proc. Amer.
  Math. Soc., 136 (2008), pp.~247--256.

\bibitem{BoSa10}
{\sc J.~Bona and J.-C. Saut}, {\em Dispersive blow-up {II}.
  {S}chr\"odinger-type equations, optical and oceanic rogue waves}, Chin. Ann.
  Math. Ser. B, 31 (2010), pp.~793--818.

\bibitem{BGT}
{\sc N.~Burq, P.~G{\'e}rard, and N.~Tzvetkov}, {\em Strichartz inequalities and
  the nonlinear {S}chr\"odinger equation on compact manifolds}, Amer. J. Math.,
  126 (2004), pp.~569--605.

\bibitem{BGTENS}
\leavevmode\vrule height 2pt depth -1.6pt width 23pt, {\em Multilinear
  eigenfunction estimates and global existence for the three dimensional
  nonlinear {S}chr\"odinger equations}, Ann. Sci. \'Ecole Norm. Sup. (4), 38
  (2005), pp.~255--301.

\bibitem{CW90}
{\sc T.~Cazenave and F.~Weissler}, {\em The {C}auchy problem for the critical
  nonlinear {S}chr\"odinger equation in {$H^s$}}, Nonlinear Anal., 14 (1990),
  pp.~807--836.

\bibitem{CCT2}
{\sc M.~Christ, J.~Colliander, and T.~Tao}, {\em Ill-posedness for nonlinear
  {S}chr\"odinger and wave equations}.
\newblock Archived as \url{http://arxiv.org/abs/math/0311048}.

\bibitem{Cui05}
{\sc S.~Cui}, {\em Pointwise estimates for a class of oscillatory integrals and
  related {$L^p$}-{$L^q$} estimates}, J. Fourier Anal. Appl., 11 (2005),
  pp.~441--457.

\bibitem{Cui06}
\leavevmode\vrule height 2pt depth -1.6pt width 23pt, {\em Pointwise estimates
  for oscillatory integrals and related {$L^p$}-{$L^q$} estimates. {II}.
  {M}ultidimensional case}, J. Fourier Anal. Appl., 12 (2006), pp.~605--627.

\bibitem{CuGu07}
{\sc S.~Cui and C.~Guo}, {\em Well-posedness of higher-order nonlinear
  {S}chr\"odinger equations in {S}obolev spaces {$H^s(\mathbb R^n)$} and
  applications}, Nonlinear Anal., 67 (2007), pp.~687--707.

\bibitem{DeFa09}
{\sc A.~Debussche and E.~Faou}, {\em Modified energy for split-step methods
  applied to the linear {S}chr\"odinger equation}, SIAM J. Numer. Anal., 47
  (2009), pp.~3705--3719.

\bibitem{GV95}
{\sc J.~Ginibre and G.~Velo}, {\em Generalized {S}trichartz inequalities for
  the wave equation}, J. Funct. Anal., 133 (1995), pp.~50--68.

\bibitem{HaNa08}
{\sc N.~Hayashi and P.~I. Naumkin}, {\em Asymptotic properties of solutions to
  dispersive equation of {S}chr\"odinger type}, J. Math. Soc. Japan, 60 (2008),
  pp.~631--652.

\bibitem{KPV91}
{\sc C.~Kenig, G.~Ponce, and L.~Vega}, {\em Oscillatory integrals and
  regularity of dispersive equations}, Indiana Univ. Math. J., 40 (1991),
  pp.~33--69.

\bibitem{KPV94}
\leavevmode\vrule height 2pt depth -1.6pt width 23pt, {\em Higher-order
  nonlinear dispersive equations}, Proc. Amer. Math. Soc., 122 (1994),
  pp.~157--166.

\bibitem{KPV01}
\leavevmode\vrule height 2pt depth -1.6pt width 23pt, {\em On the ill-posedness
  of some canonical dispersive equations}, Duke Math. J., 106 (2001),
  pp.~617--633.

\bibitem{NaOz98}
{\sc M.~Nakamura and T.~Ozawa}, {\em Nonlinear {S}chr\"odinger equations in the
  {S}obolev space of critical order}, J. Funct. Anal., 155 (1998),
  pp.~364--380.

\bibitem{Pau07}
{\sc B.~Pausader}, {\em Global well-posedness for energy critical fourth-order
  {S}chr\"odinger equations in the radial case}, Dyn. Partial Differ. Equ., 4
  (2007), pp.~197--225.

\bibitem{Pau09}
\leavevmode\vrule height 2pt depth -1.6pt width 23pt, {\em The cubic
  fourth-order {S}chr\"odinger equation}, J. Funct. Anal., 256 (2009),
  pp.~2473--2517.

\bibitem{TaoDisp}
{\sc T.~Tao}, {\em Nonlinear dispersive equations}, vol.~106 of CBMS Regional
  Conference Series in Mathematics, Published for the Conference Board of the
  Mathematical Sciences, Washington, DC, 2006.
\newblock Local and global analysis.

\end{thebibliography}

 \end{document}